\newcommand*\xbar[1]{%
   \hbox{%
     \vbox{%
       \hrule height 0.5pt % The actual bar
       \kern0.5ex%         % Distance between bar and symbol
       \hbox{%
         \kern-0.1em%      % Shortening on the left side
         \ensuremath{#1}%
         \kern-0.1em%      % Shortening on the right side
       }%
     }%
   }%
}
\newtheorem{theorem}{Theorem}[section]
\newaliascnt{lemma}{theorem}
\newaliascnt{corollary}{theorem}
\newaliascnt{definition}{theorem}
\newaliascnt{remark}{theorem}
\newaliascnt{proposition}{theorem}
\newaliascnt{conjecture}{theorem}
\newaliascnt{example}{theorem}
\newaliascnt{problem}{theorem}
\newtheorem*{lemma*}{Lemma}
\newtheorem*{corollary*}{Corollary}
\newtheorem*{definition*}{Definition}
\newtheorem{remark}[remark]{Remark}
\newtheorem*{remark*}{Remark}
\newtheorem*{proposition*}{Proposition}
\newtheorem*{conjecture*}{Conjecture}
\newtheorem*{example*}{Example}
\newtheorem*{problem*}{Problem}
\DeclareMathOperator{\inter}{int}
\def\L{\mathcal{L}}
\def\R{\mathbb{R}}
\def\Z{\mathbb{Z}}
\def\N{\mathbb{N}}
\DeclareMathOperator{\vol}{vol}
\def\e{\varepsilon}
\def\vp{\varphi}
\numberwithin{equation}{section}
\begin{document}

\title[Discrete lattice-periodic sets and Archimedean tilings]{A note on discrete lattice-periodic sets with an application to Archimedean tilings}

\author{Matthias Schymura}
\address{Institut f\"ur Mathematik, Freie Universit\"at Berlin, Arnimallee 2, 14195 Berlin, Germany}
\email{matthias.schymura@fu-berlin.de}

\author{Liping Yuan}
\address{College of Mathematics and Information Science, Hebei Normal University, 050016 Shijiazhuang, China}
\email{lpyuan@hebtu.edu.cn}

\thanks{The first author was supported by the Freie Universit\"at Berlin within the Excellence Initiative of the German Research Foundation.
The second author gratefully acknowledges financial support by National Natural Science   Foundation  of China (No.~11471095), Outstanding Youth Science Foundation  of Hebei Province (No.~2013205189),  Program for Excellent Talents  in University, Hebei Province (No.~GCC2014043) and Key Project of the Education   Department of Hebei Province (No.~zd2017043).}

% \subjclass[2000]{Primary 52A20, 52A39; Secondary 30C15}

% \keywords{}

\begin{abstract}
Cao \& Yuan obtained a Blichfeldt-type result for the vertex set of the edge-to-edge  tiling of the plane by regular hexagons.
Observing that every Archimedean tiling is the union of translates of a fixed lattice, we take a more general viewpoint and investigate basic questions for such point sets about the homogeneous and inhomogeneous problem in the Geometry of Numbers.
The Archimedean tilings nicely exemplify our results.
\end{abstract}

\maketitle

\section{Introduction}

We are motivated by a work of Cao \& Yuan~\cite{caoyuan2011a} who established a planar variant of a classical theorem of Blichfeldt in the Geometry of Numbers.
In fact, they were interested in replacing the integer lattice~$\Z^2$ by the vertex set of the Archimedean tiling of the plane by regular hexagons.
Their study suggests a conceptual investigation for the vertex set of \emph{any} Archimedean tiling, which we offer below.
The interest in extending classical results to these special sets was initiated by a paper of Ding \& Reay~\cite{dingreay1987the} from 1987, on variants of Pick's theorem.

The structural property that the Archimedean tilings all share, is that their vertex sets are finite unions of translates of a fixed lattice.
Taking this general viewpoint, we extend fundamental results in the Geometry of Numbers on the \emph{homogeneous} and \emph{inhomogeneous problem} to this setting.
The Blichfeldt-type theorems for the Archimedean tilings then drop out as a very special case.
It turns out that the elegant arguments that led to the results for lattices can often be adapted without much effort.
To this end, we focus only on the most fundamental questions in order to illustrate the flavor of the possible findings and the consequences for the Archimedean tilings.

Before we procced, we introduce some basic notation and terminology.
A \emph{lattice} is a discrete subgroup of $\R^n$, and in this paper it is always assumed to be of full-rank.
Each lattice $\Lambda$ can be defined by a basis matrix $B\in\R^{n\times n}$ as $\Lambda=B\Z^n$, which also allows to define the \emph{determinant} of $\Lambda$ to be $\det(\Lambda)=|\det(B)|$.
Given a (Lebesgue-)measurable set $X$, its volume, interior, and closure are denoted by $\vol(X)$, $\inter(X)$, and $\xbar{X}$, respectively.
A set $X$ is called \emph{$o$-symmetric} if $X=-X$.
For a vector $t\in\R^n$, and a second set $Z$, we define the translate $X+t$ of $X$ by $t$, and the Minkowski-sum $X+Z$ of~$X$ and~$Z$ as usual.
A standard textbook on the Geometry of Numbers is that of Gruber \& Lekkerkerker~\cite{gruberlekker1987geometry} to which we refer for extended background information.
We use basic notions and results on lattices without explicit mention, and instead suggest the reader to consult~\cite{gruberlekker1987geometry}, or the more recent book of Gruber~\cite{gruber2007convex}.

\section{Geometry of discrete lattice-periodic sets}
\label{sect_lat-periodic}

A fundamental principle concerning lattice points in measurable sets was introduced by Blichfeldt~\cite{blichfeldt1914anew} (cf.~\cite[\S 6]{gruberlekker1987geometry}):
Given a bounded measurable set $D\subseteq\R^n$ and a lattice $\Lambda$, there exists a translation $z\in\R^n$ such that
\begin{align}
\#\left((z+\xbar{D})\cap\Lambda\right)\geq\left\lfloor\frac{\vol(D)}{\det(\Lambda)}\right\rfloor+1.\label{eqn_blichfeldt_spec}
\end{align}

This result is at the heart of the Geometry of Numbers and spurred important developments in this discipline (cf.~\cite[Ch.~2]{gruberlekker1987geometry}).
It is thus no surprise that it has been repeatedly extended and generalized.
With our application to Archimedean tilings in mind, we subsequently formulate Blichfeldt's principle for finite unions of translates of a fixed lattice, and then derive consequences regarding a Minkowski-type theorem and basic covering statements.

A set $X\subseteq\R^n$ is called \emph{lattice-periodic} with period lattice $\Lambda\subseteq\R^n$ if $X+z=X$, for every $z\in\Lambda$.
A subset $X\subseteq\R^n$ is called \emph{discrete}, if every bounded subset of $\R^n$ contains only finitely many points of $X$.
Given a lattice $\Lambda$, the union
\begin{align}
\L(\Lambda,v_1,\ldots,v_k):=\bigcup_{i=1}^k(v_i+\Lambda),\label{eqn_discretelatperiodic}
\end{align}
of its translates by points $v_1,\ldots,v_k\in\R^n$ is clearly discrete and lattice-periodic.
Indeed, every discrete lattice-periodic set is of this form.

In order to see this, let $\{b_1,\ldots,b_n\}$ be a basis of $\Lambda$ and let
\[P=\left\{\sum_{i=1}^n m_ib_i : 0\leq m_i<1, i=1,\ldots,n\right\}\]
be the corresponding half-open fundamental cell.
Since $X$ is discrete there are finitely many points of~$X$ in $P$ which we call $v_1,\ldots,v_k$.
Since $X$ is moreover $\Lambda$-periodic, we get that $z+(P\cap X)=(P+z)\cap X$, for every $z\in\Lambda$ and hence $X=\L(\Lambda,v_1,\ldots,v_k)$.

In the sequel, we always assume that the $v_i$ are such that $v_i-v_j\notin\Lambda$, for every $1\leq i<j\leq k$, so that the representation~\eqref{eqn_discretelatperiodic} of $\L(\Lambda,v_1,\ldots,v_k)$ is minimal with respect to~$k$.

\subsection{The homogeneous problem for discrete lattice-periodic sets}

Although usually the version~\eqref{eqn_blichfeldt_spec} is cited as \emph{Blichfeldt's theorem}, his work~\cite{blichfeldt1914anew} actually contains a much more general version.
In modern language it reads as follows.

Let $P$ be a parallelepiped, that is, an affine image of a cube, and let $T\subseteq\R^n$ be a set of translation vectors such that $\bigcup_{t\in T}(P+t)=\R^n$ and $(\inter(P) +t)\cap(\inter(P)+s)=\emptyset$, for any distinct $s,t\in T$.

\begin{theorem}[Blichfeldt~\cite{blichfeldt1914anew}]\label{thm_blichfeldt_gen}
Let $D\subseteq\R^n$ be a bounded measurable set, and let $V\subseteq\R^n$ and $k\in\N$ be such that
\begin{enumerate}[i)]
 \item $\#(V\cap(P+t))=\#(V\cap(\inter(P)+t))=k$, for every $t\in T$, and
 \item $V\cap(P+t)$ is congruent to $V\cap(P+s)$, for every $s,t\in T$.
\end{enumerate}
Then, there exists a translation $z\in\R^n$ with
\[\#\left((z+\xbar{D})\cap V\right)\geq\left\lfloor\frac{\vol(D)\cdot k}{\vol(P)}\right\rfloor+1.\]
\end{theorem}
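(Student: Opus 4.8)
The plan is to adapt the averaging (mean-value) proof of the classical Blichfeldt inequality~\eqref{eqn_blichfeldt_spec} to the counting function of $V$. Put $N(z):=\#\big((z+\xbar{D})\cap V\big)$ for $z\in\R^n$, and assume as we may that $D\neq\emptyset$. Since $D$ is bounded, $\xbar{D}$ is compact and therefore meets only a fixed finite number of tiles $P+t$; by hypothesis i) each such tile carries exactly $k$ points of $V$, so $N(z)\le C$ for a constant $C$ independent of $z$. Thus $N$ is integer-valued and uniformly bounded, and as a locally finite sum $N(z)=\sum_{v\in V}\mathbf 1_{\xbar{D}}(v-z)$ of upper semicontinuous summands (each $\xbar{D}$ being closed) it is itself upper semicontinuous; in particular $M:=\sup_z N(z)$ is a finite integer that is attained. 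It therefore suffices to prove $M\ge\lfloor \vol(D)\,k/\vol(P)\rfloor+1$.

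First I would estimate the mean of $N$ over a large box. By Fubini, $\int_Q N(z)\,dz=\sum_{v\in V}\vol\big(\xbar{D}\cap(v-Q)\big)$. Choosing $Q$ to be a union of tiles $P+t$ that exhausts $\R^n$, hypothesis i) — in particular the clause $\#(V\cap(\inter(P)+t))=k$, which guarantees that no point of $V$ sits on a tile boundary — shows that $V$ meets $Q$ in exactly $k\,\vol(Q)/\vol(P)$ points. For $v$ lying deep inside $Q$ one has $\xbar{D}\subseteq v-Q$ and hence $\vol(\xbar{D}\cap(v-Q))=\vol(\xbar{D})$, while only the points $v$ within $\diam(\xbar{D})$ of $\partial Q$ contribute less, and there are only $O(\vol(\partial Q))$ of these. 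Dividing by $\vol(Q)$ and letting $Q\uparrow\R^n$ therefore gives $\tfrac{1}{\vol(Q)}\int_Q N(z)\,dz\to k\,\vol(\xbar{D})/\vol(P)$, the boundary term being of lower order because $\xbar{D}$ is bounded. Hence $M\ge k\,\vol(\xbar{D})/\vol(P)\ge k\,\vol(D)/\vol(P)$, and as $M\in\Z$ we conclude $M\ge\lceil k\,\vol(D)/\vol(P)\rceil$.

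When $k\,\vol(D)/\vol(P)\notin\Z$ this already yields $M\ge\lfloor k\,\vol(D)/\vol(P)\rfloor+1$, so the remaining — and genuinely delicate — case is the integer one, where the mean-value bound only delivers $M\ge m:=k\,\vol(D)/\vol(P)$. This is precisely where passing to the closure $\xbar{D}$ pays off, and I expect it to be the main obstacle. I would enlarge $\xbar{D}$ slightly to $\xbar{D}+\varepsilon B$, with $B$ a closed ball, apply the bound of the previous paragraph to this strictly larger set to obtain, for every small $\varepsilon>0$, a translation $z_\varepsilon$ with $\#\big((z_\varepsilon+\xbar{D}+\varepsilon B)\cap V\big)\ge m+1$, and then let $\varepsilon\to 0$. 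After normalizing by subtracting one of the $m+1$ witnessing points of $V$, the relevant configurations lie in the fixed compact set $\xbar{D}+B$, so a compactness argument extracts a limiting placement of $\xbar{D}$ containing $m+1$ points of $V$; here the discreteness of $V$ — and, in the periodic application, the uniform minimal distance between its points furnished by hypotheses i) and ii) — is what prevents two witnesses from merging in the limit. This produces $M\ge m+1$ and completes the proof.
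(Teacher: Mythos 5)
The paper never proves \autoref{thm_blichfeldt_gen}: it is quoted from Blichfeldt's 1914 paper and used as a black box in the proof of \autoref{thm_blichfeldt_unions_of_translates}, so your proposal has to be judged on its own merits. Its first half is sound: the counting function $N(z)=\#\bigl((z+\xbar{D})\cap V\bigr)$ is integer-valued and uniformly bounded, the identity $\int_Q N(z)\,dz=\sum_{v\in V}\vol\bigl(\xbar{D}\cap(v-Q)\bigr)$ is correct, and exhausting $\R^n$ by unions of tiles does give $\sup_z N(z)\geq k\vol(D)/\vol(P)$, hence $\sup_z N(z)\geq\lceil k\vol(D)/\vol(P)\rceil$. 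Note that this half uses only hypothesis~i). This settles the case $k\vol(D)/\vol(P)\notin\Z$.

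The gap is the integer case, and it is not a removable one: under hypotheses i) and ii) as literally stated, that case is \emph{false}, so no compactness argument can close it. Take $n=1$, $P=[0,1]$, $T=\Z$, $k=1$, and $V=\{j+1-\e_j : j\in\Z\}$ with $\e_j=1/(1+e^j)$, a strictly decreasing sequence with values in $(0,1)$. Each tile $[j,j+1]$ contains exactly one point of $V$, lying in its interior, so i) holds; and ii) is vacuous, since any two singletons are congruent (even translates of one another). For $D=(0,1)$ the theorem asserts that some translate of $\xbar{D}=[0,1]$ contains two points of $V$, yet every consecutive gap equals $1+(\e_j-\e_{j+1})>1$, so every closed unit interval contains at most one point. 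This example pinpoints the two flaws in your final step. First, your claim that i) and ii) furnish a uniform minimal distance is wrong: congruence constrains each tile's configuration only as an abstract point set, not its position relative to the tile boundary, and for $k=1$ it constrains nothing. Second, and more fundamentally, the limit of the difference configurations of your witnesses need not be realized by actual points of $V$: in the example, the witnessing pairs for $\xbar{D}+\e B$ escape to infinity and their gaps tend to $1$ without ever attaining it. What makes this step work in the classical proof of \eqref{eqn_blichfeldt_spec} is precisely lattice structure: differences of witnesses lie in the discrete group $\Lambda$, so in a bounded region they take finitely many values, one passes to a constant subsequence, and $\Lambda$-periodicity lets one translate the limiting configuration back into $V$. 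Both ingredients are available for the lattice-periodic sets of \autoref{thm_blichfeldt_unions_of_translates} --- where the paper in fact verifies the stronger, evidently intended, form of ii), namely $V\cap(P+t)-t=V\cap(P+s)-s$ for all $s,t$ --- but neither follows from i) and ii) as stated. So your argument can be completed in the setting where the paper actually applies the theorem, but not for \autoref{thm_blichfeldt_gen} as formulated; any correct proof must invoke a "similarly situated within the tile" hypothesis that your write-up never uses.
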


\noindent The version~\eqref{eqn_blichfeldt_spec} is the case of $V=\Lambda$ being a lattice, and where $P$ is a suitable translate of a fundamental cell of~$\Lambda$.
Discrete lattice-periodic sets fulfill the conditions in \autoref{thm_blichfeldt_gen}.
Since in this case the resulting bound turns out to be best possible, we include it as a separate statement.

\begin{theorem}\label{thm_blichfeldt_unions_of_translates}
Let $V=\L(\Lambda,v_1,\ldots,v_k)$, for some lattice $\Lambda\subseteq\R^n$ and $v_1,\ldots,v_k\in\R^n$.
Then, for every bounded measurable set $D\subseteq\R^n$, there exists a translation $z\in\R^n$ such that
\[\#\left((z+\xbar{D})\cap V\right)\geq\left\lfloor\frac{\vol(D)\cdot k}{\det(\Lambda)}\right\rfloor+1.\]
Moreover, the inequality is best possible for any $V$.
\end{theorem}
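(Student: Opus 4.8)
The plan is to deduce the lower bound from Blichfeldt's general principle (\autoref{thm_blichfeldt_gen}) and to establish optimality through an explicit family of thin boxes. For the lower bound I would let $P_0$ be the closed fundamental parallelepiped spanned by a basis $b_1,\ldots,b_n$ of $\Lambda$, so that $\vol(P_0)=\det(\Lambda)$, and put $T=\Lambda$. The only delicate point is condition~(i) of \autoref{thm_blichfeldt_gen}, which demands the same number of points of $V$ in $P+t$ and in $\inter(P)+t$; this can fail only when $\partial P_0$ meets $V$. Since $V$ is countable and $\partial P_0$ is a finite union of facets of measure zero, the set of translation vectors $s$ for which $\partial P_0+s$ meets $V$ is a countable union of null sets, hence negligible. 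I may therefore replace $P_0$ by a generic translate $P=P_0+s$ whose boundary avoids $V$; by $\Lambda$-periodicity the same holds for every $P+t$ with $t\in\Lambda$. Each such fundamental cell contains exactly one point of each coset $v_i+\Lambda$, and these cosets are distinct by the minimality convention on the $v_i$, so $\#(V\cap(P+t))=\#(V\cap(\inter(P)+t))=k$, giving condition~(i). For condition~(ii), note that $t-s\in\Lambda$ and $V+(t-s)=V$ yield $V\cap(P+t)=(V\cap(P+s))+(t-s)$, so the pieces are translates, in particular congruent. \autoref{thm_blichfeldt_gen} then produces a $z$ with the asserted count, since $\vol(P)=\det(\Lambda)$.

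For optimality I would first reduce to $\Lambda=\Z^n$: applying $B^{-1}$, where $\Lambda=B\Z^n$, leaves both the ratio $\vol(D)\,k/\det(\Lambda)$ and all point counts unchanged, and turns $V$ into $\bigcup_{i}(w_i+\Z^n)$ with distinct residues $w_i\in[0,1)^n$. I then aim, for each target integer $m$, to find a set $D$ with $\vol(D)\,k\in[m,m+1)$ such that no translate of $\xbar{D}$ contains more than $m+1$ points of $V$. The candidate is a long thin box $D=(0,\ell)\times(0,\varepsilon)^{n-1}$ whose long axis points in a primitive direction $q$ chosen so that the $k$ residues project to $k$ distinct points modulo $1$, with $\ell$ taken non-integral and $\varepsilon$ close to $1$. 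Because $\varepsilon<1$, each of the $n-1$ thin windows of a translate $z+\xbar{D}$ meets at most one lattice layer, so the problem collapses to counting, along the long axis, the intersection of a length-$\ell$ window with the $1$-periodic set $S$ of projected coset-points; taking $\varepsilon$ close to $1$ guarantees a position of $z$ that activates all $k$ cosets at once, whence the maximal count equals $\max_{z_1}\#(S\cap[z_1,z_1+\ell])$.

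I expect the main obstacle to be exactly this final matching for a general $V$: since $V$ need not be a lattice, the projected set $S$ need not be equally spaced, and I must select $\ell$ inside the plateau on which the one-dimensional maximum equals $m+1$ while simultaneously keeping $\vol(D)\,k=\ell\varepsilon^{n-1}k$ within $[m,m+1)$. Controlling the gap structure of $S$ is what makes this step delicate; a sufficiently generic choice of the primitive direction $q$ should equidistribute the residues enough that the relevant window length behaves like $m/k$, and letting $\varepsilon\to1$ closes the discrepancy between $\ell k$ and $\ell\varepsilon^{n-1}k$. As a sanity check at the bottom of the range, a ball $D$ of diameter below the minimal distance of $V$ already attains equality with value $1$, showing that the additive constant cannot be improved; the thin boxes are then what demonstrate that the linear coefficient $k/\det(\Lambda)$ is optimal as well.
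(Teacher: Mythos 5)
Your lower-bound argument is correct and is essentially the paper's own: both apply \autoref{thm_blichfeldt_gen} with $T=\Lambda$ and a translate of a fundamental cell whose boundary misses $V$ (the paper invokes discreteness of $V$ to find such a translate; your measure-zero argument over the countable set $V$ is an equally valid justification), and $\Lambda$-periodicity of $V$ yields conditions i) and ii). The problem is the optimality half, where your proposal has a genuine gap that you yourself flag. The tiny-ball example only establishes tightness at the value $1$, which, as you concede, does not rule out replacing the coefficient $k/\det(\Lambda)$ by a larger one; that burden falls entirely on the thin-box construction, and that construction is not carried out. Its key step --- choosing $\ell$ so that $\max_x\#\left(S\cap[x,x+\ell]\right)$ matches $\lfloor \ell\e^{n-1}k\rfloor+1$ --- is delegated to an ``equidistribution'' heuristic, but genericity of the direction $q$ cannot equidistribute anything: the residues $v_i$ are data fixed by $V$, and if they cluster (say, all within distance $\delta$ of one another), every projection $S$ inherits that clustering, so the plateau structure of the window-count function is dictated by quantities you do not control. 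Likewise, the reduction to one dimension (that the $n-1$ short directions of the box ``meet at most one lattice layer'' simultaneously for all $k$ cosets) is asserted, not proved. As written, the coefficient-optimality claim remains unproven.

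The gap is avoidable, because a far shorter argument works --- and it is the one the paper gives. Take $D=P_\e:=(1-\e)P$ for a fundamental cell $P$ of $\Lambda$ and $\e>0$ small, so that the proven lower bound evaluates to $\lfloor(1-\e)^nk\rfloor+1=k$. If some translate $z+P_\e$ contained $k+1$ points of $V$, then by pigeonhole two of them would lie in the same coset $v_i+\Lambda$, so their difference would be a nonzero lattice vector lying in
\[(P_\e-P_\e)\cap\Lambda\setminus\{0\}\subseteq\left(\inter(P)-\inter(P)\right)\cap\Lambda\setminus\{0\},\]
contradicting the fact that the lattice translates of $\inter(P)$ are pairwise disjoint. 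Hence no translate of $P_\e$ holds more than $k$ points, equality is attained at the value $k$, and this single example already pins down both the additive constant and the coefficient $k/\det(\Lambda)$ --- precisely what your thin boxes were meant to do, but without any analysis of gap structures or projections.
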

\begin{proof}
Let $P$ be a fundamental cell of the lattice $\Lambda$.
In particular, we have
\begin{align}
\left(x+\inter(P)\right)\cap\left(y+\inter(P)\right)=\emptyset\quad\text{for every}\quad x,y\in\Lambda, x\neq y,\label{eqn_fundCell}
\end{align}
and $\bigcup_{x\in\Lambda}(x+P)=\R^n$.
Since $V$ is a discrete set, there is a translate~$P'$ of~$P$ such that $\#(V\cap P')=\#(V\cap\inter(P'))$.
By the periodicity of~$V$, we clearly get that $\#(V\cap(P'+x))=\#(V\cap(\inter(P')+x))=k$, and $V\cap(x+P')=x+(V\cap P')$, for every $x\in\Lambda$.
Therefore, in view of \autoref{thm_blichfeldt_gen} there exists a $z\in\R^n$ such that
\[\#\left((z+\xbar{D})\cap V\right)\geq\left\lfloor\frac{\vol(D)\cdot k}{\vol(P')}\right\rfloor+1=\left\lfloor\frac{\vol(D)\cdot k}{\det(\Lambda)}\right\rfloor+1.\]
We finish the proof by showing that the inequality is best possible.
For $\e>0$, we write $P_\e=(1-\e)P$ and observe that for $D=P_\e$ and $\e$ small enough the just proven lower bound evaluates to $\left\lfloor\frac{\vol(P_\e)\cdot k}{\det(\Lambda)}\right\rfloor+1=\left\lfloor(1-\e)^n k\right\rfloor+1=k$.
Now, assume that there exists a translation $z\in\R^n$ such that $z+P_\e$ contains at least $k+1$ points of $V$, say $z_0,z_1,\ldots,z_k$.
The pigeonhole principle provides us with a pair of indices $0\leq i< j\leq k$ such that $z_i+\Lambda=z_j+\Lambda$, that is, $z_i-z_j\in(\Lambda-\Lambda)\setminus\{0\}$.
As a consequence we find
\[z_i-z_j\in(P_\e-P_\e)\cap(\Lambda-\Lambda)\setminus\{0\}\subseteq\inter(P-P)\cap(\Lambda-\Lambda)\setminus\{0\},\]
which is in contradiction with \eqref{eqn_fundCell}.
\end{proof}

One of the many applications of Blichfeldt's principle is an alternative proof of Minkowski's fundamental theorem.
This is an instance of the \emph{homogeneous problem} in the Geometry of Numbers: What are conditions for a convex set to contain a non-zero lattice point?

Minkowski~\cite{minkowski1896geometrie} (cf.~\cite[\S 5]{gruberlekker1987geometry}) showed that given a closed $o$-symmetric convex set $K\subseteq\R^n$ and a lattice $\Lambda\subseteq\R^n$ such that $\vol(K)\geq2^n\det(\Lambda)$, there will always be a non-zero point $w\in K\cap\Lambda$.
Using ~\autoref{thm_blichfeldt_unions_of_translates}, this extends to discrete lattice-periodic sets as follows.

\begin{theorem}\label{thm_minkowski_type}
Let $V=\L(\Lambda,v_1,\ldots,v_k)$ be a discrete lattice-periodic set, and let $K\subseteq\R^n$ be a closed $o$-symmetric convex set with $\vol(K)\geq 2^n\ell\frac{\det(\Lambda)}{k}$.
Then,
\begin{enumerate}[i)]
 \item $\#\left(K\cap(V-V)\right)\geq2\ell+1$, and
 \item if $V-V=V\cup(-V)$, then $\#\left(K\cap V\right)\geq\ell+1$.
\end{enumerate}
\end{theorem}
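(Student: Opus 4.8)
The plan is to imitate the classical derivation of Minkowski's theorem from Blichfeldt's principle, replacing the lattice version by \autoref{thm_blichfeldt_unions_of_translates}, and then to deduce (ii) from (i) by a short counting argument that exploits the $o$-symmetry of $K$.

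For part (i), I would first reduce to the case that $K$ is bounded: if $\vol(K)=\infty$ one may intersect $K$ with a large ball to obtain a bounded $o$-symmetric convex subset still satisfying the volume hypothesis, and any points found for the subset lie in $K$ as well. Assuming $K$ bounded, set $D=\tfrac12 K$. Since $K$ is closed we have $\xbar{D}=\tfrac12 K$ and $\vol(D)=2^{-n}\vol(K)\geq \ell\,\det(\Lambda)/k$, so \autoref{thm_blichfeldt_unions_of_translates} yields a translation $z$ for which $z+\tfrac12 K$ contains at least $\lfloor \vol(D)\,k/\det(\Lambda)\rfloor+1\geq \ell+1$ points $p_0,\dots,p_\ell$ of $V$. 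The differences $p_i-p_j$ then lie in $\tfrac12 K-\tfrac12 K=K$, where the identity $\tfrac12 K-\tfrac12 K=K$ uses precisely that $K$ is $o$-symmetric and convex, and they also lie in $V-V$.

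The crux is to produce genuinely $2\ell+1$ distinct points of $K\cap(V-V)$ out of these $\ell+1$ points. I would fix a linear functional that is injective on the finite set $\{p_0,\dots,p_\ell\}$, relabel so that $p_0$ is its minimizer, and put $d_i=p_i-p_0$ for $i=1,\dots,\ell$. The $d_i$ are distinct and nonzero, and a collision $d_i=-d_j$ would force $p_i+p_j=2p_0$, contradicting the minimality of $p_0$; hence $0,\pm d_1,\dots,\pm d_\ell$ are $2\ell+1$ distinct elements. Each lies in $K\cap(V-V)$ (for the points $-d_i$ one invokes $K=-K$ and the symmetry of $V-V$), which proves (i). I expect this distinctness argument to be the only genuinely delicate step.

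For part (ii), write $A=K\cap V$. The hypothesis $V-V=V\cup(-V)$ together with $K=-K$ gives $K\cap(V-V)=(K\cap V)\cup(K\cap(-V))=A\cup(-A)$. Combining this with part (i) yields $2\,\#A\geq \#(A\cup(-A))=\#(K\cap(V-V))\geq 2\ell+1$, and since $\#A$ is an integer this forces $\#A\geq\ell+1$. Apart from the distinctness point in (i), everything else is a direct transcription of the lattice proof.
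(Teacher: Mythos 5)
Your proposal is correct and follows essentially the same route as the paper: apply \autoref{thm_blichfeldt_unions_of_translates} to $D=\tfrac12 K$, obtain $\ell+1$ points of $V$ in a translate of $\tfrac12 K$, and pass to differences, which lie in $\tfrac12 K-\tfrac12 K=K$ by $o$-symmetry and convexity. The one place where you go beyond the paper's own writeup is the step you correctly identified as delicate: the paper simply sets $u_i=v_{i+1}-v_1$ and asserts that $0,\pm u_1,\ldots,\pm u_\ell$ are $2\ell+1$ points, which as written leaves open a possible collision $u_i=-u_j$ (i.e.\ $v_{i+1}+v_{j+1}=2v_1$); this must be excluded by taking the base point extremal with respect to a suitable total order, exactly as you do with a generic linear functional (this is the classical van der Corput argument, which the paper implicitly delegates to~\cite[\S 7.2, Thm.~1]{gruberlekker1987geometry}). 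Your reduction to bounded $K$ is likewise a technicality the paper glosses over, since \autoref{thm_blichfeldt_unions_of_translates} requires a bounded set. For part~\romannumeral2) you argue by counting, $\#\left(K\cap(V-V)\right)=\#\left(A\cup(-A)\right)\leq 2\#A$ with $A=K\cap V$, whereas the paper selects from each pair $\pm u_i$ a member lying in $V$ and adjoins the origin; both deductions are immediate, but yours has the small advantage of using part~\romannumeral1) as a black box rather than re-entering its proof.
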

\begin{proof}
The proof goes along the exact same lines as the one given in~\cite[\S 7.2, Thm.~1]{gruberlekker1987geometry}.
By assumption, $\vol(\frac12K)\geq\ell\frac{\det(\Lambda)}{k}$, and hence by \autoref{thm_blichfeldt_unions_of_translates}, there exists some $z\in\R^n$ and some points $v_1,\ldots,v_{\ell+1}\in V$, such that $z+v_i\in\frac12 K$, for $i=1,\ldots,\ell+1$.
Putting $u_i=v_{i+1}-v_1$, for $i=1,\ldots,\ell$, we obtain that $\pm u_i\neq 0$ are $\ell$ pairs of points that are contained in $\frac12K-\frac12K$, which equals $K$ due to its convexity.
Together with the origin, this makes $2\ell+1$ points contained in $K\cap(V-V)$ and thus proves part~\romannumeral1).

For part~\romannumeral2), we first observe that if $V-V=V\cup(-V)$, we necessarily have $0\in V$.
Moreover, at least one of the previously constructed points~$u_i$ and $-u_i$ is contained in $V$, for every $i=1,\ldots,\ell$, and hence $\#\left(K\cap V\right)\geq\ell+1$.
\end{proof}

\begin{remark}
Guih{\'e}neuf \& Joly~\cite{guiheneufjoly2017aminkowski} recently proved a Minkowski-type theorem for general quasicrystals, and they note that this is actually a result on the difference set of the point set in question.
In this spirit, the first part of \autoref{thm_minkowski_type} seems to be the more natural.
If $V$ is a lattice, then clearly $V-V=V$, so part~\romannumeral1) extends Minkowski's theorem.
\end{remark}

\subsection{The inhomogeneous problem for discrete lattice-periodic sets}

In contrast to the homogeneous problem the \emph{inhomogeneous problem} asks for conditions on a given set such that every of its translates contains a lattice point.
Note that this is a covering condition: In fact, every translate of $D\subseteq\R^n$ contains a point of a lattice $\Lambda$ if and only if $D+\Lambda=\R^n$.

The fundamental fact is now that every measurable set $D$ with $D+\Lambda=\R^n$ necessarily has a volume at least as large as the determinant of~$\Lambda$ (see~\cite[\S 13.5]{gruberlekker1987geometry}).
This extends almost verbatim to every discrete lattice-periodic set.

\begin{theorem}\label{thm_inhom_vol_lat-per}
Let $V=\L(\Lambda,v_1,\ldots,v_k)$ be a discrete lattice-periodic set and let $D\subseteq\R^n$ be bounded and measurable.
If $D+V=\R^n$, then $\vol(D)\geq\frac{\det(\Lambda)}{k}$.
\end{theorem}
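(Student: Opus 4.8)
The plan is to reduce the statement to the classical covering bound for lattices cited just above the theorem. The key observation is that Minkowski addition distributes over unions, so the periodic structure of $V$ can be absorbed into a single auxiliary set. Writing $V=\bigcup_{i=1}^k(v_i+\Lambda)$, I would compute
\[
D+V=\bigcup_{i=1}^k\bigl((D+v_i)+\Lambda\bigr)=D'+\Lambda,\qquad\text{where}\quad D':=\bigcup_{i=1}^k(D+v_i).
\]
Thus the hypothesis $D+V=\R^n$ is equivalent to $D'+\Lambda=\R^n$, which transfers the covering condition from the periodic set $V$ to the plain lattice $\Lambda$, at the cost of replacing $D$ by the enlarged set $D'$.

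Next I would record that $D'$ is again bounded and measurable, being a finite union of translates of the bounded measurable set $D$. Applying the fundamental lattice fact to $D'$ then yields $\vol(D')\geq\det(\Lambda)$.

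The final step is to relate $\vol(D')$ back to $\vol(D)$, using only subadditivity of Lebesgue measure together with its translation-invariance:
\[
\vol(D')=\vol\Bigl(\bigcup_{i=1}^k(D+v_i)\Bigr)\leq\sum_{i=1}^k\vol(D+v_i)=k\,\vol(D).
\]
Combining the two inequalities gives $k\,\vol(D)\geq\det(\Lambda)$, that is, the claimed bound $\vol(D)\geq\det(\Lambda)/k$.

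I do not expect a serious obstacle here: the nontrivial content is entirely contained in the cited lattice covering bound, and the remainder is bookkeeping. The one point deserving attention is that it is precisely the passage from additivity to subadditivity in the last display that introduces the factor $1/k$; in general the translates $D+v_i$ overlap, so this estimate is lossy, and one should not expect the inequality to be tight for every $V$. If a self-contained argument mirroring the classical proof ``almost verbatim'' is preferred, the same conclusion follows by folding $D'$ into a fundamental cell $P$ of $\Lambda$: since $D'+\Lambda=\R^n$, the pieces $(D'\cap(P+x))-x\subseteq P$, for $x\in\Lambda$, cover $P$, whence $\det(\Lambda)=\vol(P)\leq\sum_{x\in\Lambda}\vol(D'\cap(P+x))=\vol(D')\leq k\,\vol(D)$.
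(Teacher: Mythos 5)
Your proof is correct, but it is organized differently from the paper's. The paper does not introduce an auxiliary set; it reruns the classical folding argument directly on $V$: with $P$ a fundamental cell of $\Lambda$, it computes $k\vol(D)=\sum_{i=1}^k\sum_{w\in\Lambda}\vol((v_i+w+P)\cap D)$, reindexes this double sum over the points of $V$, and bounds the result below by $\vol(P)=\det(\Lambda)$ using the covering hypothesis --- that is, it adapts the \emph{proof} of the lattice statement rather than invoking the statement itself. Your route --- absorbing the translates into $D'=\bigcup_{i=1}^k(D+v_i)$, noting $D+V=D'+\Lambda$, and quoting the lattice bound $\vol(D')\geq\det(\Lambda)$ as a black box --- is shorter and more modular, and it makes transparent where the factor $1/k$ is lost, namely in the subadditivity estimate $\vol(D')\leq k\vol(D)$ across possibly overlapping translates. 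It has a further small advantage: your reduction transfers the covering condition to the lattice $\Lambda$, which is symmetric ($\Lambda=-\Lambda$), so no care is needed about the distinction between $D+V$ and $D-V$. In the paper's chain, the final inequality covers $P$ by the sets $D-v$, $v\in V$, whereas the hypothesis $D+V=\R^n$ literally yields covering by the sets $D+v$; since $V\neq -V$ in general, this is a (harmless) sign slip, fixable by reindexing $w\mapsto-w$ in the lattice sum, which your argument avoids by construction. What the paper's direct computation buys in exchange is reusability: the same fold-and-integrate bookkeeping, upgraded to characteristic functions, is exactly the technique needed in the proof of \autoref{thm_latptcovthm}, where a black-box reduction of this kind is not available.
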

\begin{proof}
The proof follows the standard arguments for the lattice case:
Let~$P$ be a fundamental cell of the lattice~$\Lambda$.
Since all the upcoming sums are finite, we have
\begin{align*}
 k\vol(D)&=\sum_{i=1}^k\sum_{w\in\Lambda}\vol((w+P)\cap(D-v_i))\\
 &=\sum_{i=1}^k\sum_{w\in\Lambda}\vol((v_i+w+P)\cap D)\geq\sum_{v\in V}\vol((v+P)\cap D)\\
 &=\sum_{v\in V}\vol(P\cap (D-v))\geq\vol(P)=\det(\Lambda),
\end{align*}
where the last inequality holds due to the assumption $D+V=\R^n$.
\end{proof}

Another instance of the inhomogeneous problem can be derived from the assumption that we are given two domains $D_1,D_2\subseteq\R^n$ with the property that no translate contains many points of $V$, that is, neither of them satisfies the consequence of \autoref{thm_blichfeldt_unions_of_translates}.
Then, if not both $D_1$ and $D_2$ have small volume, then the difference set $D_1-D_2$ contains points of $V-V$ in every position (see~\cite[\S 13, Thm.~5]{gruberlekker1987geometry} for the case of lattices).

\begin{theorem}\label{thm_latptcovthm}
Let $V=\L(\Lambda,v_1,\ldots,v_k)$ be a discrete lattice-periodic set, let $\ell\in\N$, and let $D_1,D_2\subseteq\R^n$ be bounded and measurable such that
\begin{enumerate}[i)]
 \item $\vol(D_1)+\vol(D_2) > \frac{\det(\Lambda)}{k}\ell$, and
 \item $\#\left((z+D_i)\cap V\right)\leq \ell$, for $i=1,2$ and every $z\in\R^n$.
\end{enumerate}
Then, every translate of $D_1-D_2$ contains at least $\ell$ points of $V-V$.
\end{theorem}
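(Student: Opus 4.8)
The plan is to argue by contradiction, reducing the statement to a one-dimensional sumset inequality combined with the averaging identity that underlies Blichfeldt's principle. So I would fix a translate and suppose, for some $t\in\R^n$, that $t+D_1-D_2$ contains at most $\ell-1$ points of $V-V$; the goal is to contradict hypothesis~(i).

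First I would introduce, for every position $z\in\R^n$, the two finite point sets $A(z)=(z+D_1)\cap V$ and $B(z)=(z-t+D_2)\cap V$, writing $N_1(z)=\#A(z)$ and $N_2(z)=\#B(z)$. The elementary but decisive observation is that $A(z)-B(z)\subseteq(t+D_1-D_2)\cap(V-V)$: indeed, any difference $v-v'$ with $v\in z+D_1$ and $v'\in z-t+D_2$ lies in $(z+D_1)-(z-t+D_2)=t+D_1-D_2$ and simultaneously in $V-V$. By the contradiction hypothesis this forces $\#(A(z)-B(z))\leq\ell-1$ for every $z$. Note also that $N_2(z)=\#((z-t+D_2)\cap V)\leq\ell$ by~(ii), since it is the count for the translate $z-t$ of $D_2$.

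The key step is then the following. Whenever $A(z)$ and $B(z)$ are both nonempty, I would invoke the sumset inequality $\#(A-B)\geq\#A+\#B-1$, valid for finite subsets of the torsion-free abelian group $(\R^n,+)$ (proved by pushing $A$ and $B$ forward under a linear functional that is injective on the relevant finite point set, which reduces it to the classical one-dimensional estimate). Combined with $\#(A(z)-B(z))\leq\ell-1$ this gives $N_1(z)+N_2(z)\leq\ell$. In the degenerate case where $A(z)$ or $B(z)$ is empty, hypothesis~(ii) alone yields $N_1(z)+N_2(z)\leq\ell$ as well. Hence $N_1(z)+N_2(z)\leq\ell$ holds for \emph{every} $z\in\R^n$.

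Finally I would integrate this pointwise bound over a fundamental cell $P$ of $\Lambda$. The standard averaging identity $\int_P\#((z+D)\cap V)\,dz=k\vol(D)$, which is the mean-value computation powering \autoref{thm_blichfeldt_unions_of_translates}, gives $\int_P\bigl(N_1(z)+N_2(z)\bigr)\,dz=k\bigl(\vol(D_1)+\vol(D_2)\bigr)$, whence $k(\vol(D_1)+\vol(D_2))\leq\ell\det(\Lambda)$, contradicting~(i). The main obstacle I anticipate is pinning down the sumset inequality in the ambient group $\R^n$ and making sure the empty-set cases are cleanly absorbed by hypothesis~(ii); the averaging identity itself is routine once the folding of $V$ over $\Lambda$ is set up.
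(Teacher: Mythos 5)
Your proposal is correct and is essentially the paper's own proof run in reverse: both arguments combine the averaging identity $\int_P \#\left((z+D_i)\cap V\right)dz = k\vol(D_i)$ over a fundamental cell $P$ with the sumset estimate $\#(A-B)\geq \#A+\#B-1$, which the paper also invokes (as an ``easy estimate'') for its sets $U$ and $U'$. The only difference is logical organization: the paper integrates first to locate a single point $x$ with $\varphi_1(x)+\varphi_2(x)\geq\ell+1$ and then applies the sumset bound there to exhibit the $\ell$ points of $V-V$ directly, whereas you apply the sumset bound pointwise (handling the empty cases via hypothesis~ii)) and integrate afterwards to contradict hypothesis~i).
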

\begin{proof}
We follow closely the arguments in the proof of~\cite[\S 13, Thm.~5]{gruberlekker1987geometry}.
First of all, observe that the assumptions are invariant under translations of $D_1$, so it suffices to show that $D_1-D_2$ contains at least $\ell$ points of $V-V$.

To this end, let $\chi_i$ be the characteristic function of $D_i$, for $i=1,2$, that is, $\chi_i(x)$ equals $1$ if $x\in D_i$, and $0$ otherwise.
Let
\[\vp_i(x)=\sum_{u\in V}\chi_i(x+u)=\#\left((D_i-x)\cap V\right),\quad\text{for}\quad x\in\R^n.\]
By \romannumeral2), we have $\vp_i(x)\leq\ell$, for every $x\in\R^n$ and $i=1,2$.
Let $P$ be a fundamental cell of $\Lambda$.
Then,
\begin{align*}
\int_P\vp_i(x)dx&=\int_P\sum_{u\in V}\chi_i(x+u)dx=\int_P\sum_{w\in\Lambda}\sum_{j=1}^k\chi_i(x+v_j+w)dx\\
&=\sum_{j=1}^k\sum_{w\in\Lambda}\int_P\chi_i(x+v_j+w)dx\\
&=\sum_{j=1}^k\sum_{w\in\Lambda}\vol\left(P\cap(D_i-v_j-w)\right)\\
&=\sum_{j=1}^k\vol(D_i-v_j)=k\vol(D_i).
\end{align*}
Therefore, using \romannumeral1), we get
\[\int_P\left(\vp_1(x)+\vp_2(x)\right)dx=k\left(\vol(D_1)+\vol(D_2)\right)>\ell\det(\Lambda).\]
Since $\vp_i(x)\in\Z$ and $\vol(P)=\det(\Lambda)$, this implies that there is some $x\in P$ such that $\vp_1(x)+\vp_2(x)\geq\ell+1$.
Together with $\vp_i(x)\leq\ell$, we get that $1\leq\vp_i(x)\leq\ell$, and hence there are $\ell+1$ points $u_1,\ldots,u_{\ell+1}\in V$ and some $1\leq p\leq \ell$, such that
\[x+u_i\in D_1,\text{ for }1\leq i\leq p,\quad\text{and}\quad x+u_j\in D_2,\text{ for }p+1\leq j\leq\ell+1.\]
Putting $U=\{u_1,\ldots,u_p\}$ and $U'=\{u_{p+1},\ldots,u_{\ell+1}\}$, we obtain $U-U'\subseteq(D_1-D_2)\cap(V-V)$.
Finally, using the easy estimate $\#(U-U')\geq\# U+\# U'-1=\ell$, we get the desired $\ell$ points of $V-V$ that are contained in the difference set $D_1-D_2$.
\end{proof}

A consequence of \autoref{thm_latptcovthm} in the case $\ell=1$ is that if $D_1$ and $D_2$ are such that $\vol(D_1)+\vol(D_2)>\frac{\det(\Lambda)}{k}$ and every of their translates contain at most one point of~$V$, then $(D_1-D_2)+(V-V)$ is a covering of $\R^n$.

\section{Application to vertex sets of Archimedean tilings}

An \emph{Archimedean tiling} is a tiling of the plane with the following properties:
\begin{enumerate}[(1)]
 \item all tiles are regular polygons;
 \item the tiling is \emph{edge-to-edge}, that is, every two tiles intersect in a common vertex or edge, or not at all;
 \item the tiling is \emph{vertex-transitive}, that is, for every two vertices there is a symmetry of the tiling that takes one to the other.
\end{enumerate}

By the third of these properties, one can identify an Archimedean tiling by the ordered sequence of polygons meeting at a fixed vertex.
More precisely, we write $(n_1.n_2\cdots n_t)$ for the tiling where at each vertex, an $n_1$-gon is next to an $n_2$-gon which is next to an $n_3$-gon, and so forth.
We also group repeated occurences of the same $n_i$-gon into exponents, so for example, the tiling consisting only of squares is denoted by $(4^4)=(4.4.4.4)$.

It is well-known that there are exactly $11$ Archimedean tilings.
Three tilings with one type of tile: $(4^4),(3^6),(6^3)$, six tilings with two types of tiles: $(3^3.4^2),(3.6.3.6),(3^2.4.3.4),(4.8^2),(3^4.6),(3.12^2)$, and two tilings with three types of tiles: $(3.4.6.4),(4.6.12)$.
We refer to~\cite{gruenbaumshephard1977tilings} for a proof of this classification, more background information, and illustrations for each of the tilings.
Here, in \autoref{fig_488tiling}, we exemplify the Archimedean tilings by an illustration of $(4.8^2)$.

\begin{figure}[ht]
 \includegraphics[scale=.4]{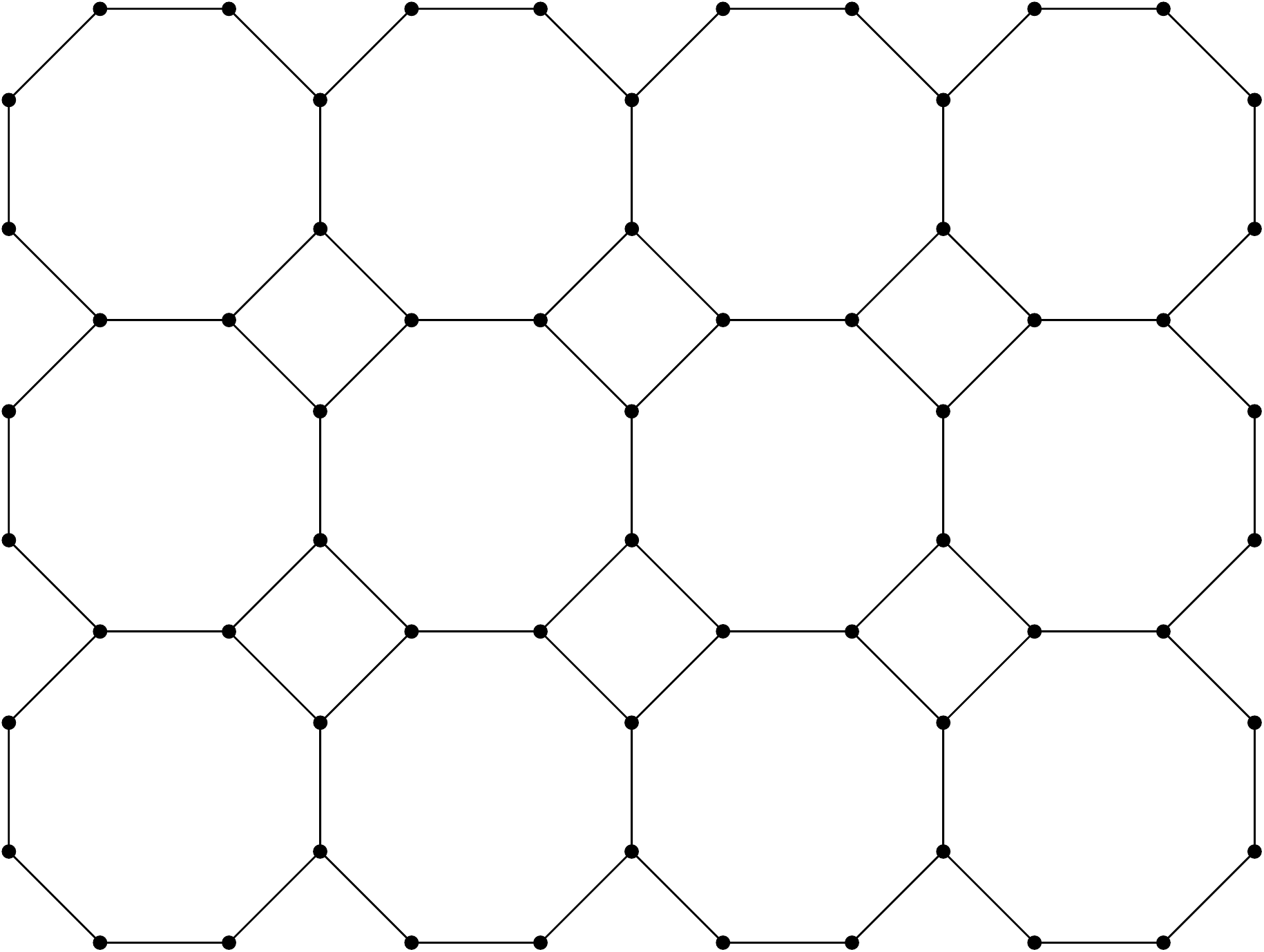}
\caption{The $(4.8^2)$ tiling.}
\label{fig_488tiling}
\end{figure}

As discussed in the introduction, our starting point for this work is the Blichfeldt-type result by Cao \& Yuan~\cite{caoyuan2011a} for what they call \emph{$H$-points}, that is, vertices of the tiling $(6^3)$.
The set of vertices of an Archimedean tiling is in general not a lattice.
Thus, a study of Blichfeldt's principle for the lattice~$\Lambda$ in~\eqref{eqn_blichfeldt_spec} replaced by these discrete sets is of interest.
However, we see below that the vertex sets of all $11$ tilings are special cases of discrete lattice-periodic sets, and hence they obey the results derived in \autoref{sect_lat-periodic}.

\renewcommand{\arraystretch}{1.5}
\begin{landscape}
\begin{table}[ht]
\small
\begin{tabular}[h]{|c|c|c|}
\hline
tiling & $B$ & $(v_1,\ldots,v_k)$ \\ \hline\hline

$(4^4)$ & $\Bigg(\begin{array}{cc}1 & 0\\ 0 & 1\end{array}\Bigg)$ & $\Bigg(\begin{array}{c}0\\ 0\end{array}\Bigg)$ \\ \hline

$(3^6)$ & $\Bigg(\begin{array}{cc}1& \tfrac12\\ 0& \tfrac{\sqrt{3}}{2}\end{array}\Bigg)$ & $\Bigg(\begin{array}{c}0\\ 0\end{array}\Bigg)$ \\ \hline

$(6^3)$ & $\Bigg(\begin{array}{cc}\frac{3}{2}& 0\\ \frac{\sqrt{3}}2& \sqrt{3}\end{array}\Bigg)$ & $\Bigg(\begin{array}{cc}0 & 1\\ 0& 0\end{array}\Bigg)$ \\ \hline \hline

$(3^3.4^2)$ & $\Bigg(\begin{array}{cc}1+\frac{\sqrt{3}}{2}& 0\\ \frac12&1\end{array}\Bigg)$ & $\Bigg(\begin{array}{cc}0 & 1\\ 0& 0\end{array}\Bigg)$ \\ \hline

$(3.6.3.6)$ & $\Bigg(\begin{array}{cc}1& 0\\ \sqrt{3}&2\sqrt{3}\end{array}\Bigg)$ & $\Bigg(\begin{array}{ccc}0 & 1 & \tfrac32\\ 0 & 0& \tfrac{\sqrt{3}}2\end{array}\Bigg)$ \\ \hline

$(3^2.4.3.4)$ & $\Bigg(\begin{array}{cc}1+\sqrt{3}& \tfrac12+\tfrac{\sqrt{3}}{2}\\ 0& \tfrac12+\tfrac{\sqrt{3}}{2}\end{array}\Bigg)$ & $\Bigg(\begin{array}{cccc}0 & \sqrt{3} & \tfrac{\sqrt{3}}2 & \tfrac{\sqrt{3}}2\\ 0 & 0 & \tfrac12 & -\tfrac12\end{array}\Bigg)$ \\ \hline

$(4.8^2)$ & $\Bigg(\begin{array}{cc} 1+\sqrt{2} & 0\\ 0 & 1+\sqrt{2}\end{array}\Bigg)$ & $\Bigg(\begin{array}{cccc}0 & 1 & 1+\tfrac{\sqrt{2}}2 & 1+\tfrac{\sqrt{2}}2\\ 0 & 0 & \tfrac{\sqrt{2}}2 & -\tfrac{\sqrt{2}}2\end{array}\Bigg)$ \\ \hline

$(3^4.6)$ & $\Bigg(\begin{array}{cc}\frac12& \sqrt{3}\\ \frac{3}{2}\sqrt{3} & -\sqrt{3}\end{array}\Bigg)$ & $\Bigg(\begin{array}{cccccc}0 & 1 & 0 & 1 & \tfrac32 & -\tfrac12 \\ 0 & 0 & \sqrt{3} & \sqrt{3} & \tfrac{\sqrt{3}}2 & \tfrac{\sqrt{3}}2 \end{array}\Bigg)$ \\ \hline

$(3.12^2)$ & $\Bigg(\begin{array}{cc}\frac32+\sqrt{3}& 0\\ 1+\frac{\sqrt{3}}{2}&2+\sqrt{3}\end{array}\Bigg)$ & {\small $\Bigg(\begin{array}{cccccc}0 & 1 & 1+\tfrac{\sqrt{3}}2 & 1+\tfrac{\sqrt{3}}2 & \tfrac32+\tfrac{\sqrt{3}}2 & \tfrac32+\tfrac{\sqrt{3}}2 \\ 0 & 0 & \tfrac12 & -\tfrac12 & \tfrac12+\tfrac{\sqrt{3}}2 & -\tfrac12-\tfrac{\sqrt{3}}2 \end{array}\Bigg)$} \\ \hline \hline

$(3.4.6.4)$ & $\Bigg(\begin{array}{cc}\frac32+\frac{\sqrt{3}}{2}& 0\\ -\frac12-\frac{\sqrt{3}}{2}&1+\sqrt{3}\end{array}\Bigg)$ & $\Bigg(\begin{array}{cccccc}0 & 1 & 0 & 1 & \tfrac32 & -\tfrac12 \\ 0 & 0 & \sqrt{3} & \sqrt{3} & \tfrac{\sqrt{3}}2 & \tfrac{\sqrt{3}}2 \end{array}\Bigg)$ \\ \hline

$(4.6.12)$ & $\Bigg(\begin{array}{cc}3+\sqrt{3}& \tfrac32+\tfrac{\sqrt{3}}{2}\\ 0 & \tfrac32+\tfrac{3}{2}\sqrt{3}\end{array}\Bigg)$ & {\small $\Bigg(\begin{array}{cccccccccccc}0 & 1 & 0 & 1 & 1+\tfrac{\sqrt{3}}2 & -\tfrac{\sqrt{3}}2 & 1+\tfrac{\sqrt{3}}2 & -\tfrac{\sqrt{3}}2 & \tfrac{3+\sqrt{3}}2 & -\tfrac{1+\sqrt{3}}2 & \tfrac{3+\sqrt{3}}2 & -\tfrac{1+\sqrt{3}}2\\ 0 & 0 & 2+\sqrt{3} & 2+\sqrt{3} & \tfrac12 & \tfrac12 & \tfrac32+\sqrt{3} & \tfrac32+\sqrt{3} & \tfrac{1+\sqrt{3}}2 & \tfrac{1+\sqrt{3}}2 & \tfrac{3+\sqrt{3}}2 & \tfrac{3+\sqrt{3}}2 \end{array}\Bigg)$} \\ \hline
\end{tabular}
\caption{Vertex sets of the Archimedean tilings as discrete lattice-periodic sets.}
\label{fig_AT_representations}
\end{table}
\end{landscape}
\renewcommand{\arraystretch}{1.0}

In \autoref{fig_AT_representations}, one finds a basis matrix $B$ for the period lattice $\Lambda=B\Z^2$ for each tiling, together with translation vectors $v_1,\ldots,v_k$ organized in a $(2\times k)$-matrix.
We assume that the involved polygons have unit edge-length, that one edge of the largest involved polygon is horizontal, and that one of its vertices is at the origin.

\renewcommand{\arraystretch}{1.5}
\begin{table}[ht]
\begin{tabular}[h]{|c|c|c|c|}
\hline
tiling & $k$ & $\det(\Lambda)$ & lower bound in \autoref{thm_blichfeldt_unions_of_translates} \\ \hline\hline

$(4^4)$ & $1$ & $1$ & $\left\lfloor\vol(D)\right\rfloor+1$ \\ \hline

$(3^6)$ & $1$ & $\tfrac12\sqrt{3}$ & $\left\lfloor\tfrac23\sqrt{3}\vol(D)\right\rfloor+1$ \\ \hline

$(6^3)$ & $2$ & $\tfrac32\sqrt{3}$ & $\left\lfloor\tfrac49\sqrt{3}\vol(D)\right\rfloor+1$ \\ \hline \hline

$(3^3.4^2)$ & $2$ & $1+\tfrac{1}{2}\sqrt{3}$ & $\left\lfloor(8-4\sqrt{3})\vol(D)\right\rfloor+1$ \\ \hline

$(3.6.3.6)$ & $3$ & $2\sqrt{3}$ & $\left\lfloor\tfrac{1}{2}\sqrt{3}\vol(D)\right\rfloor+1$ \\ \hline

$(3^2.4.3.4)$ & $4$ & $2+\sqrt{3}$ & $\left\lfloor(8-4\sqrt{3})\vol(D)\right\rfloor+1$ \\ \hline

$(4.8^2)$ & $4$ & $3+2\sqrt{2}$ & $\left\lfloor(12-8\sqrt{2})\vol(D)\right\rfloor+1$ \\ \hline

$(3^4.6)$ & $6$ & $\tfrac72\sqrt{3}$ & $\left\lfloor\tfrac47\sqrt{3}\vol(D)\right\rfloor+1$ \\ \hline

$(3.12^2)$ & $6$ & $6+\tfrac{7}{2}\sqrt{3}$ & $\left\lfloor(28\sqrt{3}-48)\vol(D)\right\rfloor+1$ \\ \hline \hline

$(3.4.6.4)$ & $6$ & $3+2\sqrt{3}$ & $\left\lfloor(4\sqrt{3}-6)\vol(D)\right\rfloor+1$ \\ \hline

$(4.6.12)$ & $12$ & $9+6\sqrt{3}$ & $\left\lfloor(\tfrac83\sqrt{3}-4)\vol(D)\right\rfloor+1$ \\ \hline
\end{tabular}
\caption{Blichfeldt-type theorems for the Archimedean tilings.}
\label{fig_BTthms}
\end{table}
\renewcommand{\arraystretch}{1.0}

Based on these explicit representations, we can now apply \autoref{thm_blichfeldt_unions_of_translates} and get an optimal Blichfeldt-type theorem for each Archimedean tiling.
The results are gathered in \autoref{fig_BTthms}.
Of course, this includes the result of~\cite{caoyuan2011a}, which looks a bit different though, because they assume the hexagons in the $(6^3)$ tiling to have area one, instead of edge-length one.

Cao \& Yuan~\cite{caoyuan2011a} also state a Minkowski-type result for the set of vertices~$H$ of the $(6^3)$ tiling, which reads as follows (assuming edge-length equal to one).

\begin{theorem}[\cite{caoyuan2011a}]\label{thm_mink_h-points}
Let $D\subseteq\R^2$ be a closed convex set that is centrally symmetric about a point of~$H$ and with area at least $2\sqrt{3}$.
Then, $D$ contains at least one other point of~$H$.
\end{theorem}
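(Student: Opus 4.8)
The plan is to reduce the claim to the classical Minkowski theorem, but applied to the \emph{difference lattice} $H-H$ rather than to the period lattice $\Lambda$ of $H$. Using the representation $H=\L(\Lambda,v_1,v_2)$ from \autoref{fig_AT_representations}, with $v_1=0$, $v_2=(1,0)$ and $\det(\Lambda)=\tfrac32\sqrt3$, I would first note that a direct application of \autoref{thm_minkowski_type} with $\ell=1$ only produces the weaker area threshold $2^2\cdot\tfrac{\det(\Lambda)}{k}=3\sqrt3$. The whole point is therefore to exploit the special geometry of $H$ that makes it better behaved than a generic discrete lattice-periodic set.

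The key observation is that $H-H$ is again a lattice. A short computation gives $H-H=\Lambda\cup\big((1,0)+\Lambda\big)\cup\big((-1,0)+\Lambda\big)$, and since $(3,0)\in\Lambda$ this is exactly the triangular lattice $\Lambda'$ generated by $(1,0)$ and $(\tfrac12,\tfrac{\sqrt3}2)$, with $\det(\Lambda')=\tfrac{\sqrt3}2$ and index $[\Lambda':\Lambda]=3$. After translating the center $p\in H$ of $D$ to the origin, the set $K:=D-p$ is a closed $o$-symmetric convex set with $\vol(K)=\vol(D)\geq2\sqrt3=2^2\det(\Lambda')$. The classical Minkowski theorem~\cite{minkowski1896geometrie}, applied to $\Lambda'$, then yields a nonzero point $w\in K\cap\Lambda'$. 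Note that the threshold $2\sqrt3$ matches the hypothesized area bound exactly, which is precisely what singles out $\Lambda'$, and not $\Lambda$, as the correct lattice to use.

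The remaining, and in my view most delicate, step is to promote the point $w\in H-H$ to a genuine vertex of $H$ lying in $D$. Here I would use the coset structure: since $p\in H$, the translate $H-p$ is the union of two of the three $\Lambda$-cosets contained in $\Lambda'$, namely $\Lambda$ (which contains $0$) and one further coset, say of class $c$ under the isomorphism $\Lambda'/\Lambda\cong\Z/3\Z$; the coset \emph{absent} from $H-p$ is then the one of class $-c$. Consequently, for the nonzero $w\in K\cap\Lambda'$ supplied by Minkowski's theorem, either $w$ already lies in a present coset, so that $w\in H-p$, or $w$ lies in the absent coset $-c$, in which case $-w$ has class $c$ and hence $-w\in H-p$; moreover $-w\in K$ by central symmetry. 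In either case $K$ meets $H-p$ in a nonzero point, that is, $D$ contains a vertex of $H$ distinct from its center $p$. What is left are only the elementary lattice computations of the first two steps, together with the harmless remark that the non-strict form of Minkowski's theorem delivers the point $w$ in the closed set $K=D-p$, which is exactly what the closedness of $D$ permits.
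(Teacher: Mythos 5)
Your proof is correct, but note that there is no proof in the paper to compare it against: \autoref{thm_mink_h-points} is quoted from Cao \& Yuan~\cite{caoyuan2011a} without proof, and the authors explicitly observe that their general machinery (\autoref{thm_minkowski_type} with $k=2$, $\det(\Lambda)=\tfrac32\sqrt3$) only yields the weaker threshold $3\sqrt3$, concluding that ``a particular study for each Archimedean tiling seems to be necessary.'' Your argument supplies exactly such a study for $(6^3)$, and each step checks out. First, $H-H=\Lambda\cup\bigl((1,0)+\Lambda\bigr)\cup\bigl((-1,0)+\Lambda\bigr)$ is indeed the unit triangular lattice $\Lambda'$: with $b_1=(\tfrac32,\tfrac{\sqrt3}2)$, $b_2=(0,\sqrt3)$ one has $(\tfrac12,\tfrac{\sqrt3}2)=b_1-(1,0)$ and $(3,0)=2b_1-b_2\in\Lambda$, so the three cosets form the subgroup $\Lambda+\Z(1,0)=\Lambda'$, which has index $3$ by the determinant ratio $\tfrac{3\sqrt3/2}{\sqrt3/2}=3$. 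Second, the hypothesis $\vol(D)\geq2\sqrt3=2^2\det(\Lambda')$ is exactly the threshold in the closed, non-strict form of Minkowski's theorem, which is the form the paper itself states, so it applies to $K=D-p$. Third, the coset bookkeeping is sound: since $p\in H$, the class-$0$ coset $\Lambda$ lies in $H-p$, and depending on whether $p\in\Lambda$ or $p\in(1,0)+\Lambda$, the classes of $H-p$ in $\Lambda'/\Lambda\cong\Z/3\Z$ are $\{0,c\}$ or $\{0,-c\}$; as the nonzero classes are precisely $\{c,-c\}$, every nonzero $w\in\Lambda'$ satisfies $w\in H-p$ or $-w\in H-p$, and both $\pm w$ lie in $K$ by $o$-symmetry, so $D$ contains the vertex $p\pm w\neq p$. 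In effect you have shown $(H-p)\cup(p-H)=H-H$ for every $p\in H$, a translated form of the property $V-V=V\cup(-V)$ that the paper verifies for $(6^3)$; what your proof adds, and what \autoref{thm_minkowski_type} cannot see, is that this difference set is itself a lattice of determinant only $\tfrac13\det(\Lambda)$, which is precisely where the optimal constant $2\sqrt3$ comes from.
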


This is a statement in the spirit of \autoref{thm_minkowski_type}~\romannumeral2), so let us check which tilings have the property that their vertex set $V$ satisfies $V-V=V\cup(-V)$.
For a general discrete lattice-periodic set $V=\L(\Lambda,v_1,\ldots,v_k)$ we have
\[V-V=\bigcup_{i,j=1}^k(v_i-v_j+\Lambda)\quad\text{and}\quad V\cup(-V)=\bigcup_{i=1}^k(\pm v_i+\Lambda).\]
Hence, $V-V=V\cup(-V)$ if and only if, for all $i,j\in\{1,\ldots,k\}$, there is some $t\in\{1,\ldots,k\}$ such that $v_i-v_j\in \pm v_t+\Lambda$.

This simple criterion is satisfied only by the tilings $(4^4)$, $(3^6)$, $(6^3)$, $(3^3.4^2)$, and $(3^4.6)$.
Moreover, for the $(6^3)$ tiling, \autoref{thm_minkowski_type} guarantees at least two points in $D\cap H$ under the condition that $\vol(D)\geq3\sqrt{3}$, which is clearly weaker than \autoref{thm_mink_h-points}.
Hence, in order to get optimal Minkowski-type results, a particular study for each Archimedean tiling seems to be necessary.

\begin{figure}[ht]
 \includegraphics[scale=.4]{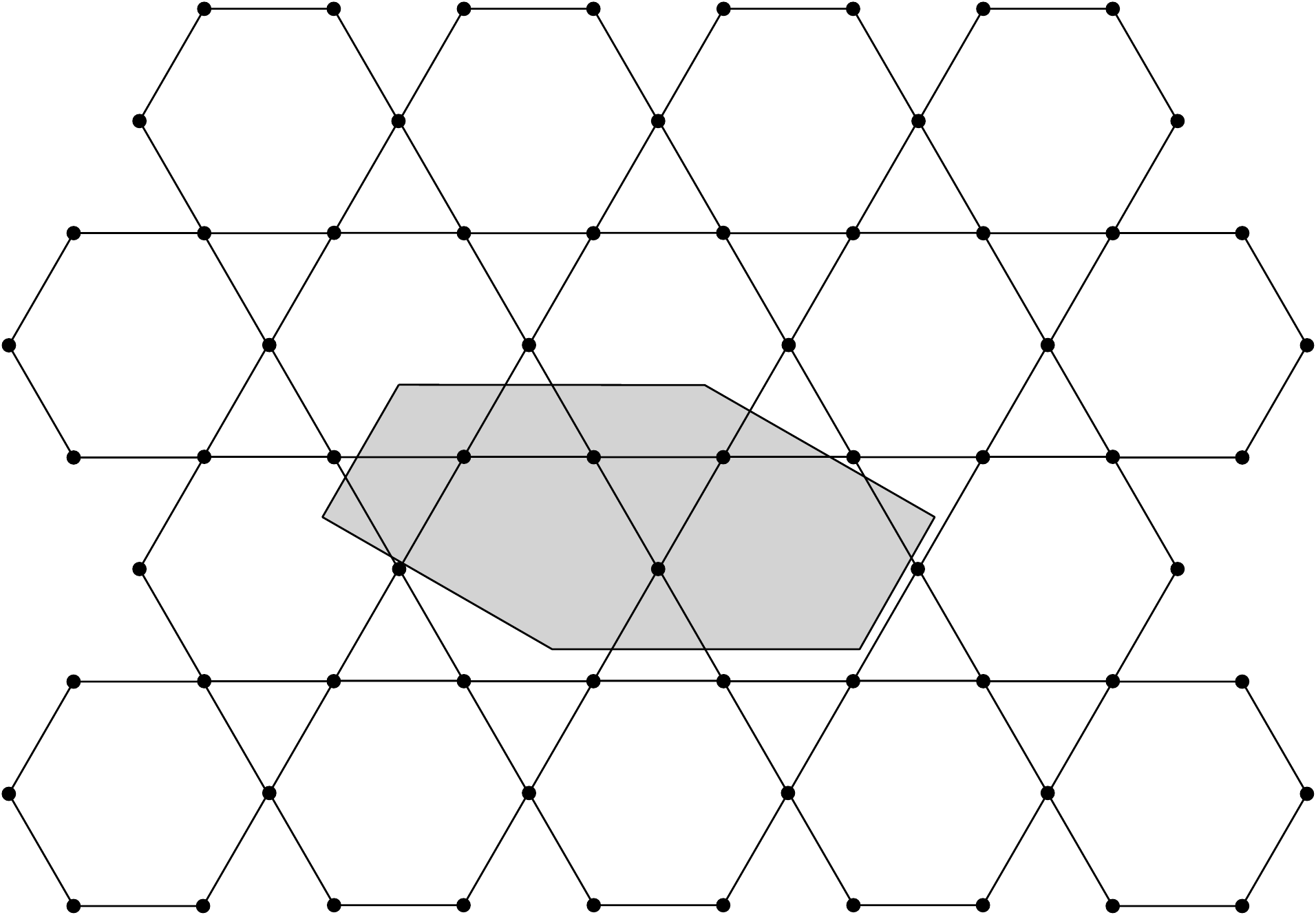}
\caption{Every translate of the shaded hexagon contains at least three vertices of $(3.6.3.6)$.}
\label{fig_3636_latptcov}
\end{figure}

Finally, we discuss an example of an Archimedean tiling that illustrates \autoref{thm_latptcovthm}.
The shaded hexagon in \autoref{fig_3636_latptcov} has the property that each of its translates contains at least three vertices of the $(3.6.3.6)$ tiling.

Again, this can be explained generally on a discrete lattice-periodic set $V=\L(\Lambda,v_1,\ldots,v_k)$.
Let $P$ and $Q$ be two fundamental cells of $\Lambda$, and for $\e>0$, let $D_1=(1-\e)P$ and $D_2=(1-\e)Q$.
Since $\vol(P)=\vol(Q)=\det(\Lambda)$, for $\e$ small enough and $\ell=k$, we get that $\vol(D_1)+\vol(D_2)>\det(\Lambda)\frac{\ell}{k}$ and $\#\left((z+D_i)\cap V\right)\leq \ell=k$, for $i=1,2$ and every $z\in\R^n$.
The latter follows from the arguments in the proof of \autoref{thm_blichfeldt_unions_of_translates}.
Therefore, the assumptions of \autoref{thm_latptcovthm} are satisfied, and as a result every translate of $D_1-D_2=(1-\e)(P-Q)$ contains at least $k$ points of $V-V$.

Now, we specialize to $V$ being the vertex set of $(3.6.3.6)$.
This tiling has the property that in fact we have $V-V=V\cup(-V)=V$, and from \autoref{fig_AT_representations} we know that $k=3$.
The difference set of two rectangles is either a rectangle, a hexagon, or an octagon.

% ---------------------------------bibliography-----------------------------
\bibliographystyle{amsplain}
\bibliography{mybib}

\end{document}